\documentclass[11pt,reqno,letter]{amsart}

\usepackage{amssymb}
\usepackage[dvipdfmx]{graphicx}

\usepackage{placeins}

\usepackage[usenames]{color}
\usepackage{caption}
\usepackage[dvips]{psfrag}
\usepackage{latexsym}
\usepackage{amsmath}
\usepackage{amsthm}
\usepackage{tabularx}
\usepackage{bm}
\usepackage{lscape}
\usepackage[numbers]{natbib}
\usepackage[usenames]{color}
\usepackage{url}
\usepackage{hyperref}
\usepackage{upgreek}
\usepackage{units}
\usepackage{fancyhdr}
\definecolor{OliveGreen}{rgb}{0,0.6,0}

\usepackage{enumitem}

\usepackage[normalem]{ulem}

\theoremstyle{plain}
\newtheorem{theorem}{Theorem}[section]

\newtheorem{proposition}[theorem]{Proposition}

\theoremstyle{definition}
\newtheorem{definition}[theorem]{Definition}

\newtheorem{assumption}[theorem]{Assumption}

\numberwithin{equation}{section}
\numberwithin{theorem}{section}
\numberwithin{table}{section}
\numberwithin{figure}{section}

\DeclareMathOperator*{\argmax}{arg\,max}

\title[Posterior and Likelihood Sensitivity in Bayesian DRO]{Posterior and Likelihood Sensitivity in \\Bayesian Distributionally Robust Optimization}

\date{\today}

\author[Gotoh]{Jun-ya Gotoh\textsuperscript{$\dagger$}}

\author[Kim]{Michael Jong Kim\textsuperscript{$\ddagger$}}

\author[Lim]{Andrew E.B. Lim\textsuperscript{$*$}}

\dedicatory{\textsuperscript{$\dagger$}Department of Data Science for Business Innovation, Chuo University, Tokyo, Japan. Email: jgoto@kc.chuo-u.ac.jp \\ \textsuperscript{$\ddagger$}Sauder School of Business, University of British Columbia, Vancouver, Canada. Email: mike.kim@sauder.ubc.ca \\ \textsuperscript{$*$}Department of Analytics and Operations, Department of Finance, and Institute of Operations Research and Analytics, National University of Singapore, Singapore. Email: andrewlim@nus.edu.sg}

\begin{document}

\begin{abstract}
We introduce the notion of worst-case posterior and worst-case likelihood sensitivity. These measure, respectively, the sensitivity of the expected cost to worst-case perturbations of the posterior distribution and worst-case perturbations of the likelihood of a Bayesian model. Each defines a quantitative measure of robustness. A decision maker concerned about the sensitivity of the out-of-sample expected cost to deviations from her assumptions will want a decision for which both sensitivities are small. We derive posterior and likelihood sensitivities for uncertainty sets defined in terms of deviation measures. Posterior sensitivity vanishes when the posterior variance shrinks to zero, which occurs when parameter uncertainty is eliminated from learning. Parameter learning does not  eliminate likelihood sensitivity. A distributionally robust formulation of a Bayesian optimization problem makes a near-Pareto-optimal tradeoff between performance (expected cost) and robustness (posterior and likelihood sensitivity).

\end{abstract}

\maketitle

{\bf Key words:} Bayesian distributionally robust optimization, worst-case sensitivity, model uncertainty, posterior sensitivity, likelihood sensitivity, robustness--performance tradeoff.

\section{Introduction}


Bayesian models provide a natural framework for learning from data. A Bayesian model requires two inputs from the decision maker (DM), a  likelihood function $L^\theta(\cdot) \equiv L(\cdot|\theta)$ and a prior distribution $\rho$ for the uncertain parameter $\theta$. Together they characterize the joint distribution of the uncertain parameter $\theta$ and the random variable $Y$. The prior on $\theta$ is updated using observed samples of $Y$. The resulting posterior   induces the predictive distribution which is used to compute the expected cost under a decision $x$. A DM  concerned about misspecification would like a small expected cost that is insensitive  to deviations from the predictive distribution.

In this paper, we introduce the notions of posterior and likelihood sensitivity. Posterior sensitivity is the sensitivity of the expected cost to worst-case deviations from the posterior while likelihood sensitivity defines an analogous notion for the likelihood function. They can be interpreted as quantitative measures of robustness.  A DM  concerned about the impact of misspecification on the out-of-sample performance  would like a decision with small expected cost and low posterior and likelihood sensitivity. 
Adapting ideas from \cite{gotoh2018robust,gotoh2021calibration,gotoh2026sensitivity} for robust empirical optimization problems, we show that a broad class of distributionally robust Bayesian optimization problems can be approximated by a ``regularized" Bayesian problem where the regularizer is a weighted sum of posterior and likelihood sensitivities. It follows that approximately Pareto optimal tradeoffs between performance (expected cost) and robustness (posterior and likelihood sensitivity) can be obtained by solving a Bayesian Distributionally Robust Optimization (DRO) problem. 

We derive expressions for posterior and likelihood sensitivity, which depend on the uncertainty sets for posterior and likelihood, and show that posterior sensitivity vanishes when the posterior distribution degenerates to a point mass, but not so for the likelihood. In other words, improvements in parameter estimates from updating the prior with data can reduce posterior, but not likelihood, sensitivity. As illustrated in \cite{gotoh2026sensitivity} for empirical optimization, sensitivity measures can guide the selection of uncertainty sets, identify when the ``price of robustness" is high, and guide system redesign to reduce this cost. 

The outline of the paper is as follows. We briefly review the literature in Section \ref{sec:lit}. We introduce the nominal model (the predictive distribution resulting from the posterior and likelihood) in Section \ref{sec:nominal} and worst-case posterior and worst-case likelihood sensitivity in Section \ref{sec:WCS}. Examples where uncertainty sets for the posterior and likelihood are smooth $\phi$-divergence and formed using different $\phi$-divergences are considered. We show in Section \ref{sec:DROtradeoff} that Bayesian DRO problems map out a nearly Pareto-optimal tradeoff between expected cost, posterior sensitivity and likelihood sensitivity. We consider a simple pricing application in Section \ref{sec:experiments}. We adopt the ``constrained" formulation of the Bayesian DRO problem, where alternative posterior and likelihood distributions are determined by a constraint on the deviation from the nominal. A generalization to the penalty formulation of the Bayesian model can be found in the Appendix.

\section{Literature review}
\label{sec:lit}

This paper is related to three streams of work: approximations of DRO as a regularized nominal problem, robustness measures induced by worst-case sensitivity, and Bayesian DRO.

Several papers \cite{bertsimas2018,blanchet2019,Duchi2017variance,esfahani2018data,gao2023distributionally, gao2024wasserstein,kuhn2019wasserstein} show that DRO can be approximated by a regularized empirical optimization problem. \cite{gotoh2026sensitivity} takes this as the starting point, shows that the regularizer can be interpreted as a quantitative measure of robustness (worst-case sensitivity), and explores the implications for robust modeling including uncertainty set selection (family and size) and system design. The present paper extends this perspective to single stage Bayesian problems. 

A Bayesian model consists of a likelihood function and a prior distribution; the predictive distribution (formed from the posterior and likelihood) is the nominal model. Generalizing ideas from \cite{gotoh2026sensitivity}, we introduce worst-case posterior and worst-case likelihood sensitivity so that a Bayesian decision maker can quantify the fragility of her out-of-sample expected cost to the components of the predictive distribution. We show how each sensitivity measure depends on the uncertainty sets for the posterior and likelihood and that a distributionally robust Bayesian problem maps out a tradeoff between performance (expected cost) and robustness (posterior and likelihood sensitivities). Our results show that posterior sensitivity vanishes when there is no parameter uncertainty, but not so for the likelihood. \cite{shapiro2023bayesian} develops Bayesian DRO and, among other results, derive worst-case likelihood sensitivity  with a Kullback--Leibler uncertainty set. 
We complement this by introducing posterior sensitivity, and considering uncertainty sets for deviation measures beyond Kullback--Leibler divergence.

There is a related body of work in Bayesian statistics that studies the sensitivity of posterior-based inferences  to perturbations of the (subjective) prior and  likelihood (see for example \cite{berger1986Bayesian,gustafson1985Bayesian,Lavine1991Bayesian}). A key difference is the focus on decision making and its role in our model (i.e., we explicitly include the objective function); in particular, how decisions can be chosen when posterior or likelihood sensitivity is large. By showing that the DRO objective can be decomposed into  expected cost plue a weighted sum of posterior sensitivity and likelihood sensitivity, we provide a systematic approach to choosing decisions that balance  performance and sensitivity to misspecification.



\section{Nominal model}
\label{sec:nominal}

Let $(\theta, Y)$ be random variables with a joint distribution ${\mathbb P}({\rm d}\theta, {\rm d}y)$. We denote the marginal distribution ${\mathbb P}({\rm d}\theta) = \rho({\rm d}\theta)$ 
and the conditional distribution  ${\mathbb P}({\rm d}y|\theta) = L^\theta({\rm d}y)$ so
\begin{eqnarray*}
{\mathbb P}({\rm d}\theta, {\rm d}y) = \rho({\rm d}\theta)L^\theta({\rm d}y).
\end{eqnarray*}
Bayesian learning models belong to this class: ${\mathcal L} = \{L^{\theta}\vert \theta\in\Theta\}$ is a parameterized family of likelihoods and $\rho$ is the posterior distribution of the uncertain parameter $\theta$.
The posterior is obtained by updating prior $\rho_0(d\theta)$ using data $Y_1, \cdots, Y_m$ and Bayes' rule. The likelihood function $L^\theta$ is a modeling choice, while the posterior depends on the user-specified prior $\rho_0(d\theta)$ and likelihood $L^{\theta}(dy)$. We are concerned about the sensitivity of the expected cost to deviations from ${\mathbb P}({\rm d}\theta, {\rm d}y)$.  

Another important class is mixture models with populations indexed by $\theta\in\{\theta_1, \cdots, \theta_n\}$ and population distribution ${\mathbb P}({\rm d}y|\theta=\theta_i) = L^{\theta_i}({\rm d}y)$. 
For concreteness, we refer to the marginal distribution $\rho(d\theta)$ as the posterior and the conditional distribution $L^\theta({\rm d}y)$ as the likelihood, fully recognizing that it is more general than a Bayesian learning model. 

Let $f$ be a cost function of the variables $(\theta, Y)$. 
The expected cost is
\begin{eqnarray*}
{\mathbb E}_{\mathbb P}[f(\theta, Y)] = {\mathbb E}_\rho\big\{{\mathbb E}_{L^\theta}[f(\theta, Y)\, | \, \theta]\big\}.
\end{eqnarray*}
The expected cost depends on the joint distribution. We are concerned about the sensitivity of the expected cost relative to worst-case deviations from $\mathbb P$. 


\section{Worst-case sensitivity} 
\label{sec:WCS}

We use ideas from \cite{gotoh2026sensitivity} to define worst-case sensitivity with respect to the posterior and the likelihood.

\subsection{Uncertainty in the posterior and likelihood}

We begin by defining deviations from the posterior $\rho({\rm d}\theta)$ and likelihood ${\mathbb P}({\rm d}y | \theta)$. 

Let $\eta$ be a distribution  on $\Theta$  and ${\rm d}( \eta | \rho)$ be a deviation measure of $\eta$ from $\rho$. For every $\theta\in\Theta$ let ${\mathbb Q}^\theta$ be an alternative to $L^\theta$ for the distribution of  $Y$ and ${\rm d}({\mathbb Q}^\theta| L^\theta)$ a measure of deviation of ${\mathbb Q}^\theta$ from $L^\theta$. It will occasionally be convenient to write ${\mathcal L} = \{ L^\theta| \theta\in\Theta\}$ when we talk about the entire family of distributions.


Let
\begin{eqnarray}
 \label{eq:uncertainty-set}
{\mathcal P}(\varepsilon) & = & \big\{ \eta\big\vert {\rm d}(\eta|\rho)\leq \varepsilon\big\},\; \\  [5pt]
{\mathcal L}^\theta(\delta) & =& \big\{{\mathbb Q}\big\vert {\rm d}({\mathbb Q}|L^\theta)\leq\delta\big\} \nonumber
\end{eqnarray}
be uncertainty sets for the posterior and likelihood distributions. We use the same notation ${\rm d}(\cdot|\cdot)$ for the deviation measure in each uncertainty set. However, different deviation measures can be used for the posterior and likelihood (see Section \ref{sec:mix}).

Consider the worst-case expected cost
\begin{eqnarray}
V(\epsilon, \delta) 
= \max_{\eta \in {\mathcal P}(\varepsilon)} {\mathbb E}_\eta \Big\{\max_{{\mathbb Q}^\theta \in {\mathcal L}^\theta(\delta)} {\mathbb E}_{Q^\theta} [f(\theta, Y)] \Big\}.
\label{eq:wc_Bayes}
\end{eqnarray}
The inner expectation is conditional on $\theta$; we consider worst-case deviations ${\mathbb Q}^\theta$ from the nominal likelihood $L^\theta$ for each $\theta$. The outer expectation considers worst-case deviations $\eta$ from the posterior $\rho$ so changes the weights on $\theta$. 

\subsection{Regularization}

Consider first the inner problem. Since the worst-case expected cost is increasing in the size of the uncertainty set $\delta$, we can write for every $\theta\in\Theta$
\begin{eqnarray}
\Psi(\theta) = \max_{{\mathbb Q}^\theta \in {\mathcal L}^\theta(\delta)} {\mathbb E}_{{\mathbb Q}^\theta} [f(\theta, Y)] 
   =    {\mathbb E}_{L^\theta}[f(\theta, Y)]  + {\mathcal A}_{L^\theta}\big(\delta; f(\theta, Y)\big)
   \label{eq:inner}
   \end{eqnarray}
where   ${\mathcal A}_{L^\theta}\big(\delta; f(\theta, Y)\big)$, the ambiguity cost, is non-negative and non-decreasing in $\delta$. 

Similarly, we can write the outer problem
\begin{eqnarray}
\max_{\eta \in {\mathcal P}(\varepsilon)} {\mathbb E}_\eta \big\{\Psi(\theta) \big\} & = & {\mathbb E}_\rho \big\{\Psi(\theta) \big\} + {\mathcal A}_{\rho}\big(\varepsilon; \Psi(\theta)\big)
\label{eq:outer}
\end{eqnarray}
where the ambiguity cost ${\mathcal A}_{\rho}\big(\varepsilon; \Psi(\theta)\big)$ is again non-negative and increasing in $\varepsilon$.

Substituting \eqref{eq:inner} into \eqref{eq:outer} it  follows that the worst-case expected cost \ref{eq:wc_Bayes} can be written
\begin{eqnarray*}
V(\epsilon, \delta)  = {\mathbb E}_\rho \Big\{{\mathbb E}_{L^\theta}[f(\theta, Y)]  \Big\} + {\mathcal A}_{\rho,{\mathcal L}}\big(\varepsilon, \delta; f(\theta, Y)\big)
\end{eqnarray*}
with ambiguity cost
\begin{eqnarray*}
{\mathcal A}_{\rho,{\mathcal L}}\big(\varepsilon, \delta; f(\theta, Y)\big) := {\mathbb E}_\rho \Big\{ {\mathcal A}_{L^\theta}\big(\delta; f(\theta, Y)\big) \Big\} + {\mathcal A}_{\rho}\Big(\varepsilon; {\mathbb E}_{L^\theta}[f(\theta, Y)]  + {\mathcal A}_{L^\theta}\big(\delta; f(\theta, Y)\big)\Big).
\end{eqnarray*}

We make the following assumptions about the ambiguity costs. The functions $g(\varepsilon)$ and $g(\delta)$ for the two ambiguity costs need not be the same.
\begin{assumption} \label{ass:Bayes}
The ambiguity costs ${\mathcal A}_{L^\theta}\big(\delta; f(\theta, Y)\big)$  and ${\mathcal A}_{\rho}\big(\varepsilon; \Psi(\theta)\big)$ are such that 
\begin{enumerate}
\item there is a non-decreasing function $g(\delta)$ such that $g(\delta)\rightarrow 0$ when $\delta\rightarrow 0$ and 
\begin{eqnarray*}
{\mathcal A}_{L^\theta}\big(\delta; f(\theta, Y)\big) 
   =  g(\delta) {\mathcal S}_{L^\theta}(f(\theta, Y))+ o(g(\delta))
\end{eqnarray*}
for every $\theta\in{\Theta}$;
\item there is a non-decreasing function $g(\varepsilon)$ such that $g(\varepsilon)\rightarrow 0$ when $\varepsilon\rightarrow 0$ and 
\begin{eqnarray*}
{\mathcal A}_{\rho}\big(\varepsilon; \Psi(\theta)\big) 
=  
g(\varepsilon) {\mathcal S}_\rho(\Psi(\theta)) + o(g(\varepsilon)).
\end{eqnarray*}
\item  ${\mathcal S}_{\rho}(\Psi)$ is continuous in $\Psi$. 
\end{enumerate}
\end{assumption}
Intuitively, $\Psi(\theta)$ is the worst-case expected cost when  the uncertainty set  is $\{{\mathcal L}^\theta(\delta)\}$ and
\begin{eqnarray*}
{\mathcal A}_{L^\theta}\big(\delta; f(\theta, Y)\big)= \max_{{\mathbb Q}^\theta \in {\mathcal L}^\theta(\delta)} {\mathbb E}_{{\mathbb Q}^\theta} [f(\theta, Y)] 
   -   {\mathbb E}_{L^\theta}[f(\theta, Y)]
   \end{eqnarray*} 
is the  increase in expected cost relative to the nominal. When Assumption \ref{ass:Bayes} holds,  ${\mathcal S}_{L^\theta}(f(\theta, Y))$  can be interpreted as the  sensitivity of the expected cost under worst-case perturbations; it depends on the deviation measure ${\rm d}({\mathbb Q} | L^\theta)$ that defines the uncertainty set. For example, ${\mathcal S}_{L^\theta}(f(\theta, Y))$ is the standard deviation of the cost $\sigma_{L^\theta} (f(\theta, Y))$ when ${\rm d}({\mathbb Q}|L^\theta)$ is smooth $\phi$-divergence. Sensitivity measures induced by other measures of deviation can be found in \cite{gotoh2026sensitivity} and references cited therein (see also Section \ref{sec:mix}).
   
Likewise, ${\mathbb E}_\rho \big\{\Psi(\theta) \big\}$ is the expected value of $\Psi(\theta)$ when $\theta$ has distribution $\rho$ and
${\mathcal S}_{\rho}(\Psi)$ is the sensitivity of the expected cost ${\mathbb E}_\rho \big\{\Psi(\theta) \big\}$ under worst-case perturbations of $\rho$. 
   
Uncertainty in posterior and likelihood contribute to increases in the total expected cost
   \begin{eqnarray*}
   {\mathcal A}_{\rho,{\mathcal L}}\big(\varepsilon, \delta; f(\theta, Y)\big) = \max_{\eta \in {\mathcal P}(\varepsilon)} {\mathbb E}_\eta \Big\{\max_{{\mathbb Q}^\theta \in {\mathcal L}^\theta(\delta)} {\mathbb E}_{Q^\theta} [f(\theta, Y)] \Big\} - {\mathbb E}_\rho \Big\{{\mathbb E}_{L^\theta}[f(\theta, Y)]  \Big\}.
   \end{eqnarray*}
Our goal is to understand how the sensitivity of the total cost depends on the posterior and likelihood.

The following result gives an expansion of the value function when $\varepsilon$ and $\delta$ are small. 
\begin{theorem} \label{prop:Bayesian expansion}
Suppose that Assumption \ref{ass:Bayes} holds. Then
\begin{eqnarray*}
V(\epsilon, \delta) & = & 
{\mathbb E}_\rho\Big\{{\mathbb E}_{L^\theta}[f(\theta, Y)] \Big\} + g(\delta){\mathbb E}_\rho\Big\{{\mathcal S}_{L^\theta}(f(\theta, Y))\Big\}  +  g(\varepsilon) {\mathcal S}_\rho\Big({\mathbb E}_{L^\theta}(f(\theta, Y))\Big) \\ [5pt] 
& & + o(g(\delta)) + o(g({\varepsilon})).
\end{eqnarray*}
\end{theorem}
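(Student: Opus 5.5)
The plan is to start from the decomposition already derived above,
\begin{eqnarray*}
V(\varepsilon,\delta) = {\mathbb E}_\rho\big\{\Psi(\theta)\big\} + {\mathcal A}_\rho\big(\varepsilon;\Psi(\theta)\big), \qquad \Psi(\theta) = {\mathbb E}_{L^\theta}[f(\theta,Y)] + {\mathcal A}_{L^\theta}\big(\delta; f(\theta,Y)\big),
\end{eqnarray*}
and expand the two pieces separately using Assumption \ref{ass:Bayes}.

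\textbf{Step 1: the first term.} By part (1) of Assumption \ref{ass:Bayes},
\[
\Psi(\theta) = {\mathbb E}_{L^\theta}[f] + g(\delta){\mathcal S}_{L^\theta}(f) + r_\delta(\theta), \qquad r_\delta(\theta)=o(g(\delta)) \text{ pointwise}.
\]
Integrating against $\rho$ gives
\[
{\mathbb E}_\rho\{\Psi\} = {\mathbb E}_\rho\{{\mathbb E}_{L^\theta}[f]\} + g(\delta){\mathbb E}_\rho\{{\mathcal S}_{L^\theta}(f)\} + {\mathbb E}_\rho\{r_\delta(\theta)\}.
\]
We then need ${\mathbb E}_\rho\{r_\delta\}=o(g(\delta))$. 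The assumption only gives the $o(g(\delta))$ bound for each $\theta$, so I would read it as holding uniformly in $\theta$, or at least in a dominated sense. Concretely, $|r_\delta(\theta)|/g(\delta)\le h(\theta)$ with $h$ $\rho$-integrable, and then dominated convergence closes the step. This interchange of limit and integral is the first technical point, and I would state it explicitly as the interpretation of ``for every $\theta$''.

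\textbf{Step 2: the ambiguity term.} By part (2),
\[
{\mathcal A}_\rho(\varepsilon;\Psi) = g(\varepsilon){\mathcal S}_\rho(\Psi_\delta) + o(g(\varepsilon)),
\]
where I write $\Psi_\delta$ to stress the dependence on $\delta$. Set $\Psi_0(\theta)={\mathbb E}_{L^\theta}[f(\theta,Y)]$. By Step 1, $\Psi_\delta\to\Psi_0$ as $\delta\to 0$, pointwise and, under the same uniformity, in whatever topology is used in part (3). Continuity of ${\mathcal S}_\rho$ then gives ${\mathcal S}_\rho(\Psi_\delta)={\mathcal S}_\rho(\Psi_0)+o(1)$. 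Hence
\[
g(\varepsilon){\mathcal S}_\rho(\Psi_\delta) = g(\varepsilon){\mathcal S}_\rho(\Psi_0) + g(\varepsilon)\,o(1),
\]
where the $o(1)$ is as $\delta\to0$.

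\textbf{Main obstacle: the cross term.} The remainder $g(\varepsilon)\,o(1)$ is a product of a vanishing factor in $\varepsilon$ and a vanishing factor in $\delta$, and it must be absorbed into $o(g(\delta))+o(g(\varepsilon))$. Since $g(\varepsilon)\to0$, this product is $o(g(\varepsilon))$ whenever both parameters tend to zero jointly, and that suffices.

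A second subtlety is that the $o(g(\varepsilon))$ remainder in part (2) is applied to a $\delta$-dependent function $\Psi_\delta$. It must therefore be uniform over the family $\{\Psi_\delta\}$ for small $\delta$. I would handle this by assuming part (2) holds uniformly on a neighbourhood of $\Psi_0$, which is natural for the smooth $\phi$-divergence case where ${\mathcal S}_\rho$ is a standard deviation.

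\textbf{Conclusion.} Adding the expansions from Steps 1 and 2 yields the claimed formula.
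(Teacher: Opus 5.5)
Your argument matches the paper's proof: expand $\Psi$ with part (1), integrate against $\rho$, apply part (2) to the $\delta$-dependent $\Psi$, and use continuity of ${\mathcal S}_\rho$ to absorb $g(\varepsilon)\{{\mathcal S}_\rho(\Psi_\delta)-{\mathcal S}_\rho(\Psi_0)\}$ into $o(g(\varepsilon))$; note that this works because the bracket vanishes as $\delta\to 0$, not because $g(\varepsilon)\to 0$. The paper's proof uses tacitly the uniformity and domination conditions you state explicitly, namely for pulling the pointwise $o(g(\delta))$ remainder through ${\mathbb E}_\rho$ and for applying part (2) uniformly along the family $\Psi_\delta$, so stating them tightens the argument without changing the approach.
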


\begin{proof}
It follows from \eqref{eq:inner} and Assumption \ref{ass:Bayes} that
\begin{eqnarray*}
\Psi(\theta)  &= &   {\mathbb E}_{L^\theta}[f(\theta, Y)]  + {\mathcal A}_{L^\theta}\big(\delta; f(\theta, Y)\big) \\ & = & {\mathbb E}_{L^\theta}[f(\theta, Y)]  + g(\delta) {\mathcal S}_{L^\theta}(f(\theta, Y))+ o(g(\delta)).
\end{eqnarray*}
From \eqref{eq:outer} and Assumption \ref{ass:Bayes} we have
\begin{align*}
V(\varepsilon, \delta) & =  \max_{\eta \in {\mathcal P}(\varepsilon)} {\mathbb E}_\eta \big\{\Psi(\theta) \big\} \\[8pt] 
& = \max_{\eta \in {\mathcal P}(\varepsilon)} {\mathbb E}_\eta\Big\{{\mathbb E}_{L^\theta}[f(\theta, Y)]  + g(\delta) {\mathcal S}_{L^\theta}(f(\theta, Y))+ o(g(\delta))\Big\} \\[8pt]
& = {\mathbb E}_\rho\Big\{{\mathbb E}_{L^\theta}[f(\theta, Y)]+ g(\delta) {\mathcal S}_{L^\theta}(f(\theta, Y))+ o(g(\delta)) \Big\} \\[8pt]
& \qquad + g(\varepsilon){\mathcal S}_\rho\Big({\mathbb E}_{L^\theta}[f(\theta, Y)]  + g(\delta) {\mathcal S}_{L^\theta}(f(\theta, Y))+ o(g(\delta))\Big) + o(g(\varepsilon)) \\[8pt]
& = {\mathbb E}_\rho\Big\{{\mathbb E}_{L^\theta}[f(\theta, Y)] \Big\} + g(\delta){\mathbb E}_\rho\Big\{{\mathcal S}_{L^\theta}(f(\theta, Y))\Big\}  +  g(\varepsilon) {\mathcal S}_\rho\Big({\mathbb E}_{L^\theta}(f(\theta, Y))\Big) \\[8pt]
& \qquad + g(\varepsilon) \left\{{\mathcal S}_\rho\Big({\mathbb E}_{L^\theta}[f(\theta, Y)]  + g(\delta) {\mathcal S}_{L^\theta}(f(\theta, Y)) + o(g(\delta)) \Big) - {\mathcal S}_\rho\Big({\mathbb E}_{L^\theta}(f(\theta, Y))\Big)\right\}  
\\[8pt]
& 
\qquad + o(g(\delta)) + o(g({\varepsilon})) \\[8pt]
& = {\mathbb E}_\rho\Big\{{\mathbb E}_{L^\theta}[f(\theta, Y)] \Big\} + g(\delta){\mathbb E}_\rho\Big\{{\mathcal S}_{L^\theta}(f(\theta, Y))\Big\}  +  g(\varepsilon) {\mathcal S}_\rho\Big({\mathbb E}_{L^\theta}(f(\theta, Y))\Big) \\[8pt] 
& \qquad + o(g(\delta)) + o(g({\varepsilon}))
\end{align*}
where the last equality follows from the continuity of ${\mathcal S}_\rho(\Psi)$.
\end{proof}

\subsection{Worst-case sensitivity}

Given likelihood function $L^\theta(y)$ ($\theta\in\Theta$) and posterior $\rho$, 
worst-case posterior sensitivity is the sensitivity of the expected cost under worst-case deviations from the posterior defined by the uncertainty set ${\mathcal P}(\varepsilon)$:
\begin{eqnarray*}
{\mathcal S}_\rho(f)  =  \lim_{\epsilon\rightarrow 0}\frac{V(\epsilon, 0)- {\mathbb E}_\rho\Big\{{\mathbb E}_{L^\theta}[f(\theta, Y)]\Big\}}{g({\epsilon})}
=
{\mathcal S}_\rho\Big\{{\mathbb E}_{L^\theta}[f(\theta, Y)]\Big\}.
\end{eqnarray*}

Given the nominal posterior $\rho$, likelihood  $L^\theta(y)$, and uncertainty set ${\mathcal L}^\theta(\delta)$ for deviations from the likelihood, worst-case likelihood sensitivity is the sensitivity of the expected cost with respect to worst-case deviations from the nominal likelihood
\begin{eqnarray*}
{\mathcal S}_{\mathcal L}(f)  =  \lim_{\delta\rightarrow 0}\frac{V(0, \delta)- {\mathbb E}_\rho\Big\{{\mathbb E}_{L^\theta}[f(\theta, Y)]\Big\}}{g(\delta)}
 =  {\mathbb E}_\rho \Big\{{\mathcal S}_{L^\theta}(f(\theta, Y))\Big\}.
\end{eqnarray*}
Note that posterior (likelihood) sensitivity ${\mathcal S}_\rho$  depends on the deviation measure ${\rm d}(\eta|\rho)$  that defines the uncertainty set ${\mathcal P}(\varepsilon)$, and likelihood sensitivity  ${\mathcal S}_{L^\theta}$ on the deviation measure  ${\rm d}({\mathbb Q}|L^\theta)$ that controls deviations from the nominal likelihood.

In general, ${\mathcal S}_\rho$ is a generalized measure of deviation \cite{gotoh2026sensitivity,rockafellar2006generalized} so posterior sensitivity is zero when $\rho$ is a point mass (i.e., there is no uncertainty about $\theta$) or when $g(\theta) = {\mathbb E}_{L^\theta}[f(\theta, Y)]$ is constant on the support of $\rho$. 
Likelihood sensitivity is zero only when, for $\rho$-almost every $\theta$, the sensitivity of the cost under $L^\theta$ is zero. For the deviation measures considered here, this occurs when $f(\theta, y)$ is constant in $y$ on the support of $L^\theta$.

The following example considers posterior and likelihood sensitivity when deviation measures are smooth $\phi$-divergence.

\subsection{Smooth $\phi$-divergence}
 \label{eg:phi_Bayes}

Suppose that $\eta$ is absolutely continuous with respect to $\rho$ and  ${\mathbb Q}^\theta$ is absolutely continuous with respect to $L^\theta$ for every $\theta\in\Theta$. Let $\frac{{\rm d} \eta}{{\rm d}\rho}$ denote the Radon--Nikodym derivative of $\eta$ with respect to $\rho$, and for every $\theta\in\Theta$, $\frac{{\rm d}Q^\theta}{{\rm d}L^\theta}$ be the Radon--Nikodym derivative of $Q^{\theta}$ with respect to $L^\theta$.

For the likelihood function,  $\phi$-divergence of ${\mathbb Q}^\theta$ with respect to $L^\theta$, for every $\theta\in{\Theta}$, is defined by
\begin{eqnarray*}
 {\rm d}({\mathbb Q}^\theta|L^\theta)= {\mathbb E}_{L^\theta} \left[\phi \left(\frac{{\rm d}{\mathbb Q}^\theta}{{\rm d} L^\theta}\right) \right].
\end{eqnarray*}
In the case of smooth $\phi$-divergence, Assumption \ref{ass:Bayes} holds and the inner problem \cite{gotoh2026sensitivity} 
\begin{eqnarray*}
\max_{{\mathbb Q}^\theta \in {\mathcal L}^\theta_{\phi}(\delta)} {\mathbb E}_{Q^\theta} [f(\theta, Y)] 
=  {\mathbb E}_{L^\theta}[f(\theta, Y)]  + \sqrt{\frac{2\delta}{\phi''(1)}} {\sigma}_{L^\theta}(f(\theta, Y))+ o(\sqrt{\delta}).
\end{eqnarray*}
For the outer problem we measure the deviation of $\eta$ from $\rho$ using $\phi$-divergence:
\begin{eqnarray*}
{\rm d}( \eta | \rho) & = & {\mathbb E}_\rho \left[\phi\left(\frac{{\rm d}\eta}{{\rm d}\rho}\right)\right]
\end{eqnarray*}
and it follows that
\begin{eqnarray*}
\max_{\eta \in {\mathcal P}_\phi(\varepsilon)} {\mathbb E}_\eta[\Psi(\theta)] 
= {\mathbb E}_\rho[\Psi(\theta)] + \sqrt{\frac{2\varepsilon}{\phi''(1)}}\sigma_\rho(\Psi(\theta)) + o(\sqrt{\varepsilon}).
\end{eqnarray*}
In particular, $g(\delta) = \sqrt{\delta}$ and 
\begin{eqnarray*} 
{\mathcal S}_{L^\theta}(f) =  \sqrt{\frac{2}{\phi''(1)}}  {\sigma}_{L^\theta}(f(\theta, Y))
\end{eqnarray*}  
for the inner problem, where ${\sigma}_{L^\theta}(f(\theta, Y)) = \sqrt{ {\mathbb V}_{L^\theta}(f(\theta, Y))}$ is the standard deviation of the cost under the distribution $L^\theta$. For the outer problem,
$g(\varepsilon) = \sqrt{\varepsilon}$ and
\begin{eqnarray*}
{\mathcal S}_\rho(\Psi(\theta))  =\sqrt{\frac{2}{\phi''(1)}}\sigma_\rho(\Psi(\theta)).
\end{eqnarray*}
Note that the regularizer in the expansion of the worst-case expected costs are given by the standard deviation because we are using smooth $\phi$-divergence as the deviation measure. Other uncertainty sets give different regularizers \cite{gotoh2026sensitivity}.

It now  follows from Theorem \ref{prop:Bayesian expansion} that the worst-case expected cost 
\begin{eqnarray*}
V(\epsilon, \delta) 
& = &  {\mathbb E}_\rho\Big\{{\mathbb E}_{L^\theta}[f(\theta, Y)]\Big\}+ \sqrt{\frac{2\delta}{\phi''(1)}}{\mathbb E}_\rho \Big({\sigma}_{L^\theta}(f(\theta, Y))\Big) + \sqrt{\frac{2\varepsilon}{\phi''(1)}}
{\sigma}_\rho\Big({\mathbb E}_{L^\theta}[f(\theta, Y)]\Big) \nonumber \\
& & + o(\sqrt{\epsilon}) + o(\sqrt{\delta}).
\end{eqnarray*}
Worst-case posterior  sensitivity is
\begin{eqnarray}
{\mathcal S}_\rho(f) 
= \sqrt{\frac{2}{\phi''(1)}}{\sigma}_\rho\Big({\mathbb E}_{L^\theta}[f(\theta, Y)]\Big)
\label{eq:prior-sensitivity-const}
\end{eqnarray}
and worst-case likelihood sensitivity is 
\begin{eqnarray}
{\mathcal S}_{\mathcal L}(f) 
= \sqrt{\frac{2}{\phi''(1)}} {\mathbb E}_\rho \Big\{{\sigma}_{L^\theta}(f(\theta, Y))\Big\}.
\label{eq:likelihood-sensitivity-const}
\end{eqnarray}
Both sensitivity measures depend on the posterior and objective function, but quite differently. Posterior sensitivity  is the standard deviation of the conditional expectation $g(\theta) = {\mathbb E}_{L^\theta}[f(\theta, Y)]$ and will be small if $g(\theta)$ is ``flat" on the support of $\rho$, even if the standard deviation of $\theta$ is large. 
It vanishes when the posterior variance shrinks (e.g.) because of learning. Likelihood sensitivity depends on the variance of the cost given $\theta$. It typically will not vanish even if $\theta$ is known because there is still uncertainty about the distribution $L^\theta$ of $Y$.

The paper \cite{shapiro2023bayesian} derives worst-case likelihood sensitivity  with a Kullback--Leibler uncertainty set (the inner ``max" in \eqref{eq:wc_Bayes}) but not the posterior. We complement this by introducing posterior sensitivity and by showing how the worst-case objective can be approximated by  expected cost, posterior sensitivity, and likelihood sensitivity.

\subsection{Mix and match}
\label{sec:mix}

As noted in the discussion following \eqref{eq:uncertainty-set}, there is no reason to restrict ourselves to smooth $\phi$-divergence for the deviation measure or to have the same deviation measure for the posterior and likelihood. For example, if instead of smooth $\phi$-divergence, we use 
\begin{eqnarray*}
\phi(z)=\delta_{[1-\varepsilon,\frac{1-(1-\varepsilon)\alpha}{1-\alpha}]}(z)
\end{eqnarray*} 
to define the uncertainty set for the likelihood function, we have \cite{gotoh2026sensitivity}\footnote{Given $\theta$, $\mathrm{CVaR}_{L^\theta,\alpha}(f(\theta, Y))$ is the conditional-value-at-risk of the cost $f(\theta, Y)$ at level $\alpha$ when $Y$ has distribution $L^\theta$: \begin{eqnarray*}\mathrm{CVaR}_{L^\theta,\alpha}(f(\theta, Y)):=\min_\gamma\Big\{\gamma+\frac{1}{1-\alpha}\mathbb{E}_{L^\theta}\Big(\max\big\{f(\theta, Y)-\gamma,0\big\}\Big)\Big\}\end{eqnarray*}}
\begin{eqnarray*} 
{\mathcal S}_{L^\theta}(f) = \mathrm{CVaR}_{L^\theta,\alpha}(f(\theta, Y))-\mathbb{E}_{L^\theta} (f(\theta, Y)).
\end{eqnarray*} 
(WCS for other deviation measures are also derived in \cite{gotoh2026sensitivity}). We choose this uncertainty set and deviation measure if we are concerned about misspecification of the right tail of the cost distribution. 
If smooth $\phi$-divergence is used for the posterior and this alternative is used for the likelihood, we have
\begin{align*}
{\mathcal S}_{\rho}(f) &= \sqrt{\frac{2}{\phi''(1)}}{\sigma}_\rho\Big({\mathbb E}_{L^\theta}[f(\theta, Y)]\Big), \\[5pt]
{\mathcal S}_{\mathcal L}(f) & = {\mathbb E}_\rho \Big\{\mathrm{CVaR}_{L^\theta, \alpha}(f(\theta, Y))-\mathbb{E}_{L^\theta} (f(\theta, Y))\Big\}.
\end{align*}

\section{Distributionally Robust Optimization}
\label{sec:DRO}

\subsection{Performance--sensitivity tradeoffs in DRO}
\label{sec:DROtradeoff}

Theorem \ref{prop:Bayesian expansion} shows that the  worst-case expected cost can be locally expanded as
\begin{eqnarray}
\lefteqn{\min_x \max_{\eta \in {\mathcal P}(\varepsilon)} {\mathbb E}_\eta \Big\{\max_{{\mathbb Q}^\theta \in {\mathcal L}^\theta(\delta)} {\mathbb E}_{Q^\theta} [f(x, \theta, Y)] \Big\}}
\label{eq:DRO-regularized}
\\ [5pt]
& = & \min_x {\mathbb E}_\rho\Big\{{\mathbb E}_{L^\theta}[f(x, \theta, Y)]\Big\}+ g(\delta) {\mathbb E}_\rho \Big({\mathcal S}_{L^\theta}(f(x, \theta, Y))\Big) + g(\varepsilon)  {\mathcal S}_\rho\Big({\mathbb E}_{L^\theta}[f(x, \theta, Y)]\Big) 
\nonumber  \\ [5pt] 
& & + o(g({\epsilon})) + o(g({\delta})).
\nonumber
\end{eqnarray}
This representation is related to results showing the relationship between DRO and a regularized nominal problem. For empirical optimization problems, \cite{gotoh2026sensitivity} shows that the regularization term is not just a mathematical object that connects the nominal and worst-case problems, but  has the physical interpretation as worst-case sensitivity. \eqref{eq:DRO-regularized} shows an analogous result for a Bayesian DRO problem, where the regularizer is now a weighted sum of posterior  sensitivity ${\mathcal S}_\rho({\mathbb E}_{L^\theta}[f(x, \theta, Y)])$   and likelihood sensitivity ${\mathbb E}_\rho ({\mathcal S}_{L^\theta}(f(x, \theta, Y)))$, and varying the uncertainty set sizes maps out a tradeoff between performance (expected cost) and robustness (posterior and likelihood sensitivity). 



\section{Experiment}
\label{sec:experiments}

Demand $D(p)$ is normal with standard deviation $\sigma$ and expected value ${\mathbb E}[D(p)] = a + b p$ which depends on the price $p$. We assume that $\sigma$ is known and constant whereas $a$ and $b$ are constant but unknown to the DM. At the time of the decision, the decision maker has a posterior on $(a, b)$ which we assume to be normal. (This could have been obtained by updating an initial prior using data and Bayes' rule.) The DM's revenue when price is $p$ and observed demand is $D(p)$ is $p\cdot D(p)$. The expected revenue under the posterior is ${\mathbb E}_\rho [p \, D(p)] = p \cdot ({\mathbb E}_\rho[a]  + {\mathbb E}_\rho[b]\cdot p)$; the nominal DM chooses price to maximize this quantity. 

We assume a modified $\chi^2$--uncertainty set for the posterior and likelihood, i.e., $\phi(z) = \frac{1}{2}(z-1)^2$. It follows that posterior and likelihood sensitivities are given by
\begin{align}
\label{eq:pricing-senitivities}
{\mathcal S}_\rho[p\, D(p)] & = \sqrt{2} p \, \sigma_\rho [a + b p], \\
{\mathcal S}_{\mathcal L}[p\, D(p)] & = \sqrt{2}\sigma p. \nonumber
\end{align}
For this experiment, under the baseline posterior, $a$ has mean $5$ and standard deviation $0.75$, and $b$ has mean $-0.5$ and standard deviation $2.5$; we assume $a$ and $b$ are independent under the posterior. We set $\sigma = 3$.  For the nominal problem the optimal price is $p =\$5$; the optimal expected reward is $\$12.50$.

Figure \ref{fig:frontier} shows reward--posterior sensitivity and reward--likelihood sensitivity frontiers when the posterior variance for $a$ and $b$ is 0, $1/4$ of the baseline, the baseline, and double the baseline. This simulates the effect of learning, which reduces the posterior variance, on posterior and likelihood sensitivity. We see in the first plot that posterior sensitivity is increasing in the posterior variance, quantifying the sensitivity of the expected reward to the posterior. Note that posterior sensitivity is $0$ when there is no parameter uncertainty. The DRO solution makes a tradeoff between expected reward and worst-case posterior sensitivity. For this particular example, DRO is quite effective when the posterior variance is large ($2\times$baseline) as substantial reduction in the posterior sensitivity from its value under the nominal model can be achieved with minimal impact on expected cost. It is less effective at reducing posterior sensitivity when the posterior variance is small ($0.25 \times$baseline). However, the frontier shows that this is not really necessary as the posterior sensitivity is already relatively small. 

\begin{figure}[!htbp]
\begin{center}
\includegraphics[scale=0.26]{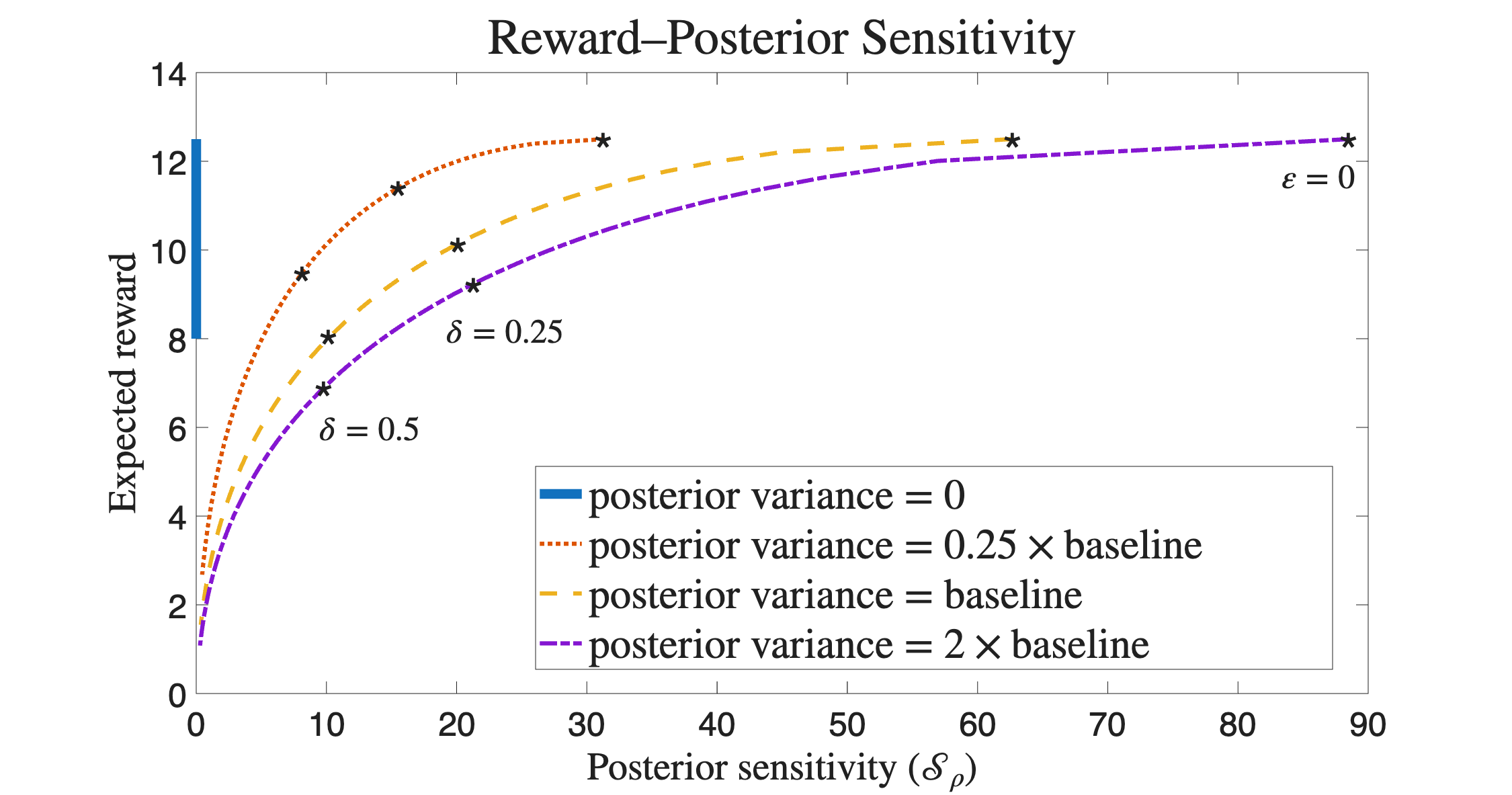}  \\ [5pt]
\includegraphics[scale=0.26]{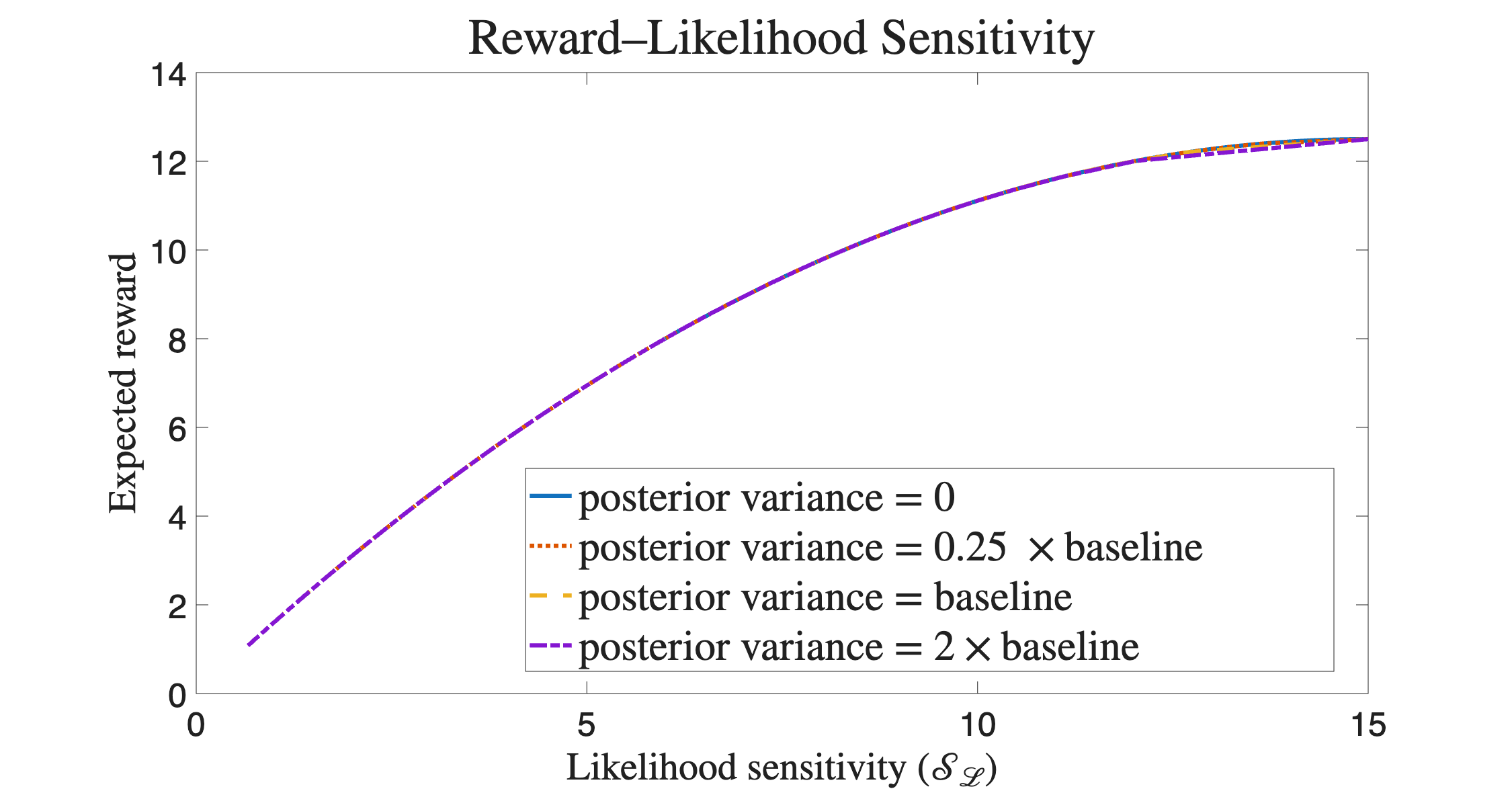} 
\end{center}
\caption{Reward--posterior sensitivity frontiers (upper), and reward--likelihood sensitivity frontiers (lower), for several posterior variances. Uncertainty sets for posterior and likelihood uncertainty are equal in size (i.e., $\delta = \varepsilon$); points  corresponding to $\varepsilon=0, 0.25$ and $0.5$ are indicated in the top plot. The expected reward under the nominal ($\varepsilon=\delta=0$) is $12.5$. From  the upper plot, posterior--sensitivity of the nominal solution is increasing in the  posterior variance; it vanishes when posterior variance is $0$. For this example, likelihood sensitivity is independent of the posterior variance.  }
\label{fig:frontier}
\end{figure}

From the lower plot, we see that the mean--likelihood sensitivity frontiers lie on top of one another (the small deviations are from numerical errors). This is not a general property but a quirk of this example; it would not be the case if $\sigma$ was also uncertain\footnote{
It is easy to see that
\begin{eqnarray*}
{\mathbb E}_\rho[p\,D(p)] = \frac{{\mathcal S}_{\mathcal L}}{\sigma\sqrt{2}}{\mathbb E}_\rho[a] + \left(\frac{{\mathcal S}_{\mathcal L}}{\sigma\sqrt{2}}\right)^2 {\mathbb E}_\rho[b] 
\end{eqnarray*}
so the expected reward, given likelihood sensitivity ${\mathcal S}_{\mathcal L}$, does not depend on the posterior variance. It follows that the reward--likelihood sensitivity frontiers do not depend on the posterior variance. It can also be shown that the reward--posterior sensitivity frontier does not depend on the likelihood variance $\sigma^2$ (again, a quirk of the problem).}. The main message, however, is that we  tradeoff between expected reward, posterior sensitivity, and likelihood sensitivity as the size of the uncertainty set $\varepsilon=\delta$ changes, with DRO prices being mapped  in Figure  \ref{fig:DRO_price}.

\begin{figure}[!htbp]
\begin{center}
\includegraphics[scale=0.26]{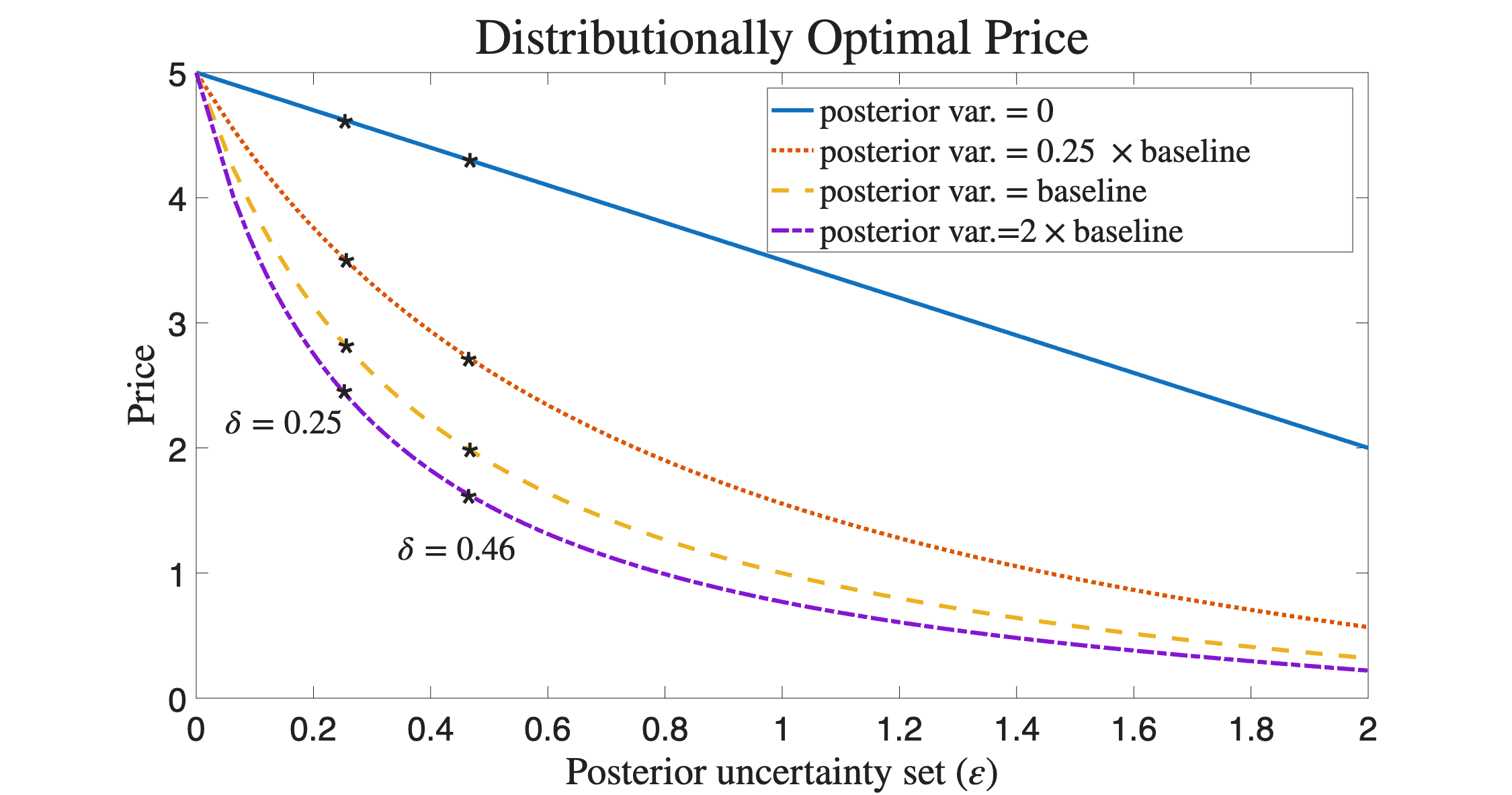}  
\end{center}
\caption{Optimal robust price as a function of the size of the uncertainty set ($\varepsilon=\delta$) for different posterior variance.
}
\label{fig:DRO_price}
\end{figure}

We can see from \eqref{eq:pricing-senitivities} that the likelihood sensitivity does not vanish when the posterior variance is $0$, so long as $p\neq 0$. This is a general property: while more data shrinks the posterior and eliminates  posterior sensitivity (as observed in Figure \ref{fig:frontier}),  uncertainty about the distribution of demand associated with the likelihood model remains unless $\sigma=0$.

 \begin{figure}[!htbp]
\begin{center}
\includegraphics[scale=0.26]{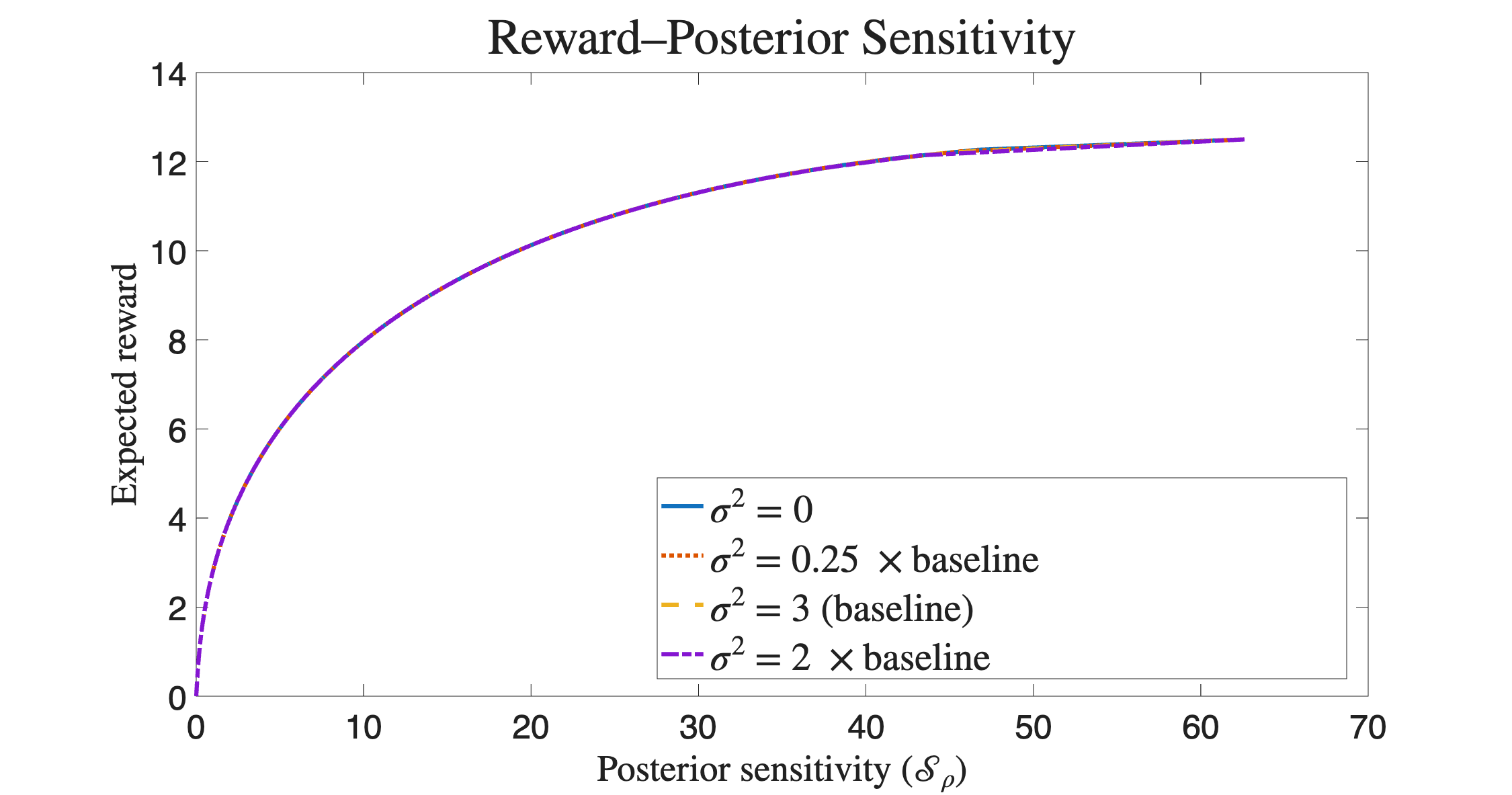} 
\\ [5pt]
\includegraphics[scale=0.25]{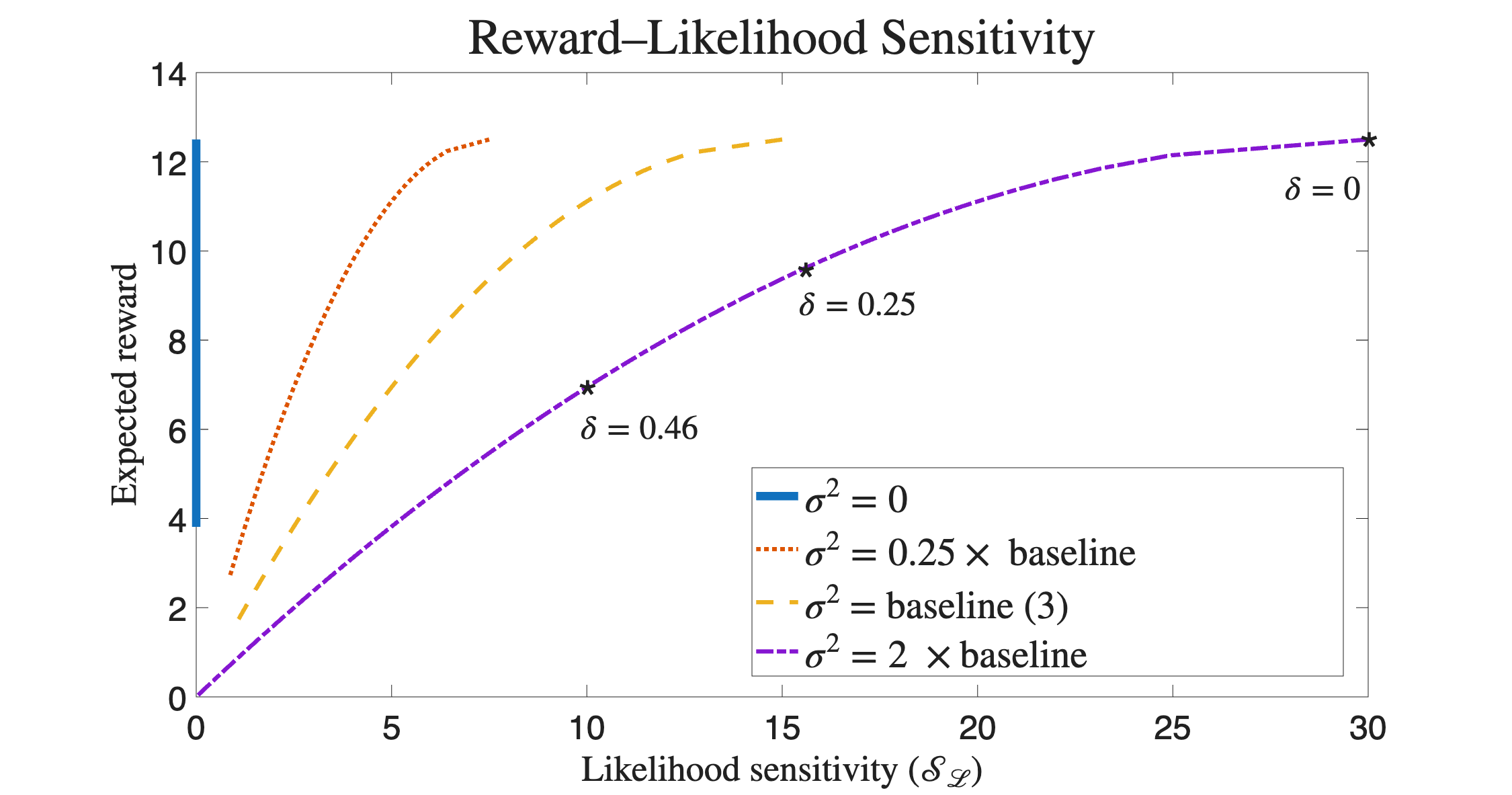} 
\end{center}
\caption{ The upper figure shows expected reward versus posterior sensitivity whereas the lower figure shows expected reward versus likelihood sensitivity for a collection of likelihood variances $\sigma^2$. Uncertainty sets for posterior and likelihood are equal in size (i.e., $\delta = \varepsilon$). In this example, the reward–posterior sensitivity frontier is independent of the likelihood variance, while the reward–likelihood sensitivity frontier changes with $\sigma^2$.
}
\label{fig:likelihoodvar-frontier}
\end{figure}

Figure \ref{fig:likelihoodvar-frontier} shows performance--robustness frontiers when the posterior variance is held fixed and the variance $\sigma^2$ of the likelihood function is varied. The reward--likelihood sensitivity frontier is now increasing in $\sigma^2$ whereas the reward--posterior frontier remains unchanged.




\section{Conclusion}
\label{sec:conclusion}

Bayesian models account for parameter uncertainty by specifying a prior on the uncertain parameters. However, the expected cost can still be sensitive to deviations from the  likelihood and the posterior (obtained by updating the user-specified prior using Bayes' rule and data) which may result in poor out-of-sample performance. Worst-case posterior sensitivity and worst-case likelihood sensitivity measure the sensitivity of the expected cost to worst-case deviations from the posterior and the likelihood, respectively. They are measures of robustness and a decision maker, concerned about the fragility of her assumptions, may prefer a decision for which both robustness measures are small. Worst-case posterior sensitivity vanishes when the posterior concentrates to a single point, which can occur when parameter uncertainty is reduced from learning. This is not the case with likelihood sensitivity. This is clear in the case of smooth $\phi$-divergence, where posterior sensitivity is the posterior standard deviation of the conditional expectation of the cost whereas likelihood sensitivity is the posterior expected value of its conditional standard deviation.
A nearly Pareto-optimal tradeoff between performance (expected cost) and robustness (worst-case posterior and likelihood sensitivity) can be mapped out by solving a Distributionally Robust Bayesian Optimization problem. 


\paragraph{\bf Acknowledgments}
Jun-ya Gotoh is supported by the MEXT Grant-in-Aid 24K01113.
Michael Kim is supported  by the Natural Sciences and Engineering Research Council (NSERC) Discovery Grant RGPIN-2015-04019.
Andrew Lim is supported by the  Ministry of Education, Singapore, under its  2024 Academic Research Fund Tier 2 grant call (Award ref: MOE-T2EP20224-0018).

\newpage

\bibliographystyle{plainnat}
\bibliography{references}

\newpage

\appendix

\section{Penalty version of the Bayesian model}
\label{App:BayesPenalty}
To compute worst-case sensitivity for the penalty version in the case of smooth $\phi$-divergence, we proceed as before by defining worst-case deviations from the nominal distribution by solving a worst-case problem, and computing the sensitivity using the worst-case distribution. We account for the Bayesian structure by considering perturbations of the posterior and likelihood, which allows us to quantify the impact of these choices on sensitivity. 

Let $\eta$ be a distribution for $\theta$ on $\Theta$  and ${\rm d}( \eta | \rho)$ be a deviation measure of $\eta$ from $\rho$.

Let ${\mathcal L} = \{ L^\theta| \theta\in\Theta\}$ be the nominal likelihood function and ${\mathcal Q} = \{{\mathbb Q}^\theta\vert\theta\in\Theta\}$ be a family of probability distributions for $Y$ parameterized by $\theta\in\Theta$. For every $\theta$, ${\rm d}({\mathbb Q}^\theta| L^\theta)$ is the deviation of ${\mathbb Q}^\theta$ from $L^\theta$. 
We define the deviation of ${\mathcal Q}$ from $\mathcal L$ as the expected deviation of the probability measure ${\mathbb Q}^\theta$ from ${\mathbb P}^\theta$ under the posterior $\rho$:
\begin{eqnarray}
{\rm d}({\mathcal Q}| {\mathcal L}) =  {\mathbb E}_\mathbb \rho \big\{{\rm d}({\mathbb Q}^\theta| L^\theta)\big\}.
\label{eq:dQL}
\end{eqnarray}

We find worst-case perturbations from the nominal model $\rho$ and $L^\theta$ by solving the worst-case problem
\begin{align*}
\lefteqn{\max_{{\eta, \,{\mathcal Q}} } {\mathbb E}_\eta \Big[{\mathbb E}_{{\mathbb Q}^{\theta}} \big\{f(x, Y)\big\}\Big] - \frac{1}{\delta}{\rm d}({\mathcal Q}| {\mathcal L}) - \frac{1}{\varepsilon} {\rm d}(\eta | \rho)} \\
& = \max_\eta  {\mathbb E}_\eta\left[\max_{{{\mathbb Q}^\theta\in {\mathcal Q}} }{\mathbb E}_{{\mathbb Q}^{\theta}} \big\{f(x, Y)\big\} - \frac{1}{\delta}{\mathbb E}_{L^\theta}\left\{\phi\left(\frac{{\rm d}Q^\theta}{{\rm d}L^\theta}\right)\right\}\right] - \frac{1}{\varepsilon}{\mathbb E}_\rho \left[\phi\left(\frac{{\rm d}\eta}{{\rm d}\rho}\right)\right]  \\
& = \max_\eta  {\mathbb E}_\eta \big[\Psi(\theta)\big] - \frac{1}{\varepsilon}{\mathbb E}_\rho \left[\phi\left(\frac{{\rm d}\eta}{{\rm d}\rho}\right)\right],
\end{align*}
where
\begin{eqnarray*}
\Psi(\theta) =\frac{1}{\delta} \ln {\mathbb E}_{L^{\theta}} \Big[e^{\delta f(x, Y)}\Big]
\end{eqnarray*}
and $\varepsilon$ controls deviations of the alternative posterior $\eta$ from the nominal $\rho$, while $\delta>0$ controls deviations of ${\mathbb Q}^\theta$ from $L^\theta$.
The worst-case distribution is given by
\begin{eqnarray*}
{\mathbb Q}^{\varepsilon,\delta}({\rm d}\theta, {\rm d}y) = \eta^\varepsilon({\rm d}\theta) {\mathbb Q}^{\theta,\delta}({\rm d}y),
\end{eqnarray*}
where
\begin{align*}
{\mathbb Q}^{\theta, \delta}  & =   \argmax_{\mathbb Q}{\mathbb E}_{{\mathbb Q}} \big[f(x, Y) \big] - \frac{1}{\delta}{\mathbb E}_{L^\theta}\left[\phi\left(\frac{{\rm d}{\mathbb Q}}{{\rm d}L^\theta}\right)\right],\\
\eta^\varepsilon & =  \argmax_\eta  {\mathbb E}_\eta \big[\Psi(\theta)\big] - \frac{1}{\varepsilon}{\mathbb E}_\rho \left[\phi\left(\frac{{\rm d}\eta}{{\rm d}\rho}\right)\right].
\end{align*}
The following result characterizes the worst-case distribution when $\delta$ and $\eta$ are small.
\begin{proposition} \label{prop:wc-bayes}
The worst-case measure ${\mathbb Q}^{\varepsilon, \delta}({\rm d}\theta, {\rm d}y)$
where
\begin{align*}
\frac{{\rm d}{\mathbb Q}^{\theta, \delta}}{{\rm d}L^{\theta}}  
& = [\phi']^{-1}\Big(\delta\big( f(Y)  -{\mathbb E}_{L^{\theta}}[f(Y)]\big)\Big) \\ 
& = 1 + \frac{\delta}{\phi{''}(1)}\Big\{f(Y)-{\mathbb E}_{L^\theta}[f(Y)]\Big\} + o(\delta),\\
\frac{{\rm d}\eta^\varepsilon}{{\rm d}\rho}
& = [\phi']^{-1}\Big(\varepsilon\big( \Psi(\theta)  -{\mathbb E}_{\rho}[\Psi(\theta)]\big)\Big) \\
& = 1+ \frac{\varepsilon}{\phi''(1)}\Big(\Psi(\theta)-{\mathbb E}_\rho[\Psi(\theta)]\Big).
\end{align*}
\end{proposition}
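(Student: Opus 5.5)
We would treat the two layers of the worst-case problem separately, since the display preceding the proposition already decouples them. For a fixed $\theta$, the inner problem is a concave maximization over densities $Z = {\rm d}{\mathbb Q}/{\rm d}L^\theta \ge 0$ subject to ${\mathbb E}_{L^\theta}[Z]=1$. The outer problem, for the resulting $\Psi(\theta)$, has exactly the same form with $(L^\theta, f, \delta)$ replaced by $(\rho, \Psi, \varepsilon)$. It therefore suffices to prove one generic lemma and apply it twice.

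\emph{Generic lemma.} Let $\mu$ be a probability measure, $h$ a bounded measurable function, and $t>0$ small. Consider
\[
\max_{Z\ge 0,\ {\mathbb E}_\mu[Z]=1} \ {\mathbb E}_\mu[Z h] - \tfrac{1}{t}\,{\mathbb E}_\mu[\phi(Z)].
\]
We claim its maximizer is $Z^t = [\phi']^{-1}\big(t(h-\lambda_t)\big)$, with $\lambda_t = {\mathbb E}_\mu[h] + O(t)$. Consequently
\[
Z^t = 1 + \tfrac{t}{\phi''(1)}\big(h - {\mathbb E}_\mu[h]\big) + o(t)
\]
uniformly.

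The proof of the lemma proceeds in three steps.
\begin{enumerate}
\item Introduce a Lagrange multiplier $\lambda$ for the normalization constraint. The Lagrangian ${\mathbb E}_\mu[Z(h-\lambda) - \phi(Z)/t] + \lambda$ is then maximized pointwise in $Z$. By strict convexity of $\phi$, the pointwise optimality condition $\phi'(Z) = t(h-\lambda)$ has the unique solution $Z = [\phi']^{-1}(t(h-\lambda))$.
\item Since $h$ is bounded and $t$ is small, the argument $t(h-\lambda)$ lies in a small neighborhood of $0 = \phi'(1)$. Smoothness of $\phi$ near $1$ with $\phi''(1)>0$ makes $[\phi']^{-1}$ $C^1$ there, so $Z>0$ and the nonnegativity constraint is inactive.
\item Expand $[\phi']^{-1}(s) = 1 + s/\phi''(1) + o(s)$ and substitute into ${\mathbb E}_\mu[Z]=1$. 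This gives $t({\mathbb E}_\mu[h]-\lambda)/\phi''(1) + o(t) = 0$, so $\lambda_t - {\mathbb E}_\mu[h] \to 0$; more precisely $\lambda_t = {\mathbb E}_\mu[h] + o(1)$, and in fact $O(t)$ under $C^2$ smoothness. Existence of $\lambda_t$ follows from continuity and strict monotonicity of $\lambda \mapsto {\mathbb E}_\mu[[\phi']^{-1}(t(h-\lambda))]$ (intermediate value theorem). Because $t(\lambda_t - {\mathbb E}_\mu h) = o(t)$, replacing $\lambda_t$ by ${\mathbb E}_\mu[h]$ inside $[\phi']^{-1}$ changes $Z^t$ only by $o(t)$. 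This yields both displayed forms in the statement, the first being understood to leading order.
\end{enumerate}

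The lemma is then applied twice. First take $\mu = L^\theta$, $h = f(x,\cdot)$, $t=\delta$; this gives the formula for ${\rm d}{\mathbb Q}^{\theta,\delta}/{\rm d}L^\theta$. Second take $\mu=\rho$, $h=\Psi$, $t=\varepsilon$; this gives ${\rm d}\eta^\varepsilon/{\rm d}\rho$. For the second application we need $\Psi$ bounded. This follows because the optimal inner value lies between ${\mathbb E}_{L^\theta}[f]$ (take $Z=1$) and $\sup f$. The product form ${\mathbb Q}^{\varepsilon,\delta} = \eta^\varepsilon\,{\mathbb Q}^{\theta,\delta}$ is immediate from the nested decomposition already written above the proposition.

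The main obstacle is rigor in the multiplier step. We must guarantee that $t(h-\lambda_t)$ stays inside the range of $\phi'$ near $0$. We must also show that the remainder $o(t)$ is uniform in $y$ (respectively in $\theta$), since this uniformity is what lets us integrate the expansion. We would handle this by assuming $f$ is bounded (uniformly in $\theta$) and that $\phi$ is $C^2$ near $1$ with $\phi''(1)>0$ ("smooth $\phi$-divergence"). Then a Taylor bound with uniform modulus on a compact interval gives both properties. In the $\varepsilon$-layer, $\Psi$ depends on $\delta$, but it is uniformly bounded, so the same argument applies with constants independent of $\delta$.
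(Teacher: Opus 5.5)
The paper states this proposition without proof. The $[\phi']^{-1}$ form of the displayed densities shows that it has in mind exactly the Lagrangian first-order argument you give, so your route is the intended one. It is correct under the implicit standing assumptions: $f$ bounded, $\phi$ convex with $\phi(1)=\phi'(1)=0$, $\phi$ of class $C^2$ near $1$, and $\phi''(1)>0$.

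Your version is also more accurate than the statement. The exact maximizer is $[\phi']^{-1}\big(t(h-\lambda_t)\big)$, where $\lambda_t$ is the normalizing multiplier. In general $\lambda_t$ differs from ${\mathbb E}_\mu[h]$. The two coincide for all $h$ only when $[\phi']^{-1}$ is affine near $0$, as for the modified $\chi^2$ divergence of the experiment, provided $t$ is small enough that $Z\ge 0$ is inactive. For Kullback--Leibler, $[\phi']^{-1}(s)=e^s$ and $\lambda_t=t^{-1}\ln{\mathbb E}_\mu[e^{th}]$, which is exactly the paper's formula for $\Psi$ (for KL the optimal value equals the multiplier). For general $\phi$, your definition of $\Psi$ as the optimal inner value is the right one. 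So the first equality in each display, and the second display as written (without its $o(\varepsilon)$), hold only to first order, as you say.

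Three small points.

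\emph{The $O(t)$ claim.} The estimate $\lambda_t={\mathbb E}_\mu[h]+O(t)$ needs $\phi''$ to be Lipschitz near $1$ (e.g.\ $\phi\in C^3$). With only $\phi\in C^2$ you get $\lambda_t-{\mathbb E}_\mu[h]=o(1)$. That is all your argument uses, so drop the $O(t)$ claim from the lemma.

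\emph{The intermediate-value step.} Make it explicit by bracketing $\lambda\in[\inf h,\sup h]$. At $\lambda=\sup h$ the expectation is at most $1$, and at $\lambda=\inf h$ it is at least $1$. On this bracket $|t(h-\lambda)|\le t(\sup h-\inf h)$. This keeps the argument inside the neighbourhood of $0$ where $[\phi']^{-1}$ is $C^1$, which is what justifies your step 2.

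\emph{The product form.} It is not quite ``immediate''. In the paper the likelihood penalty is weighted by $\rho$, as in ${\rm d}({\mathcal Q}|{\mathcal L})={\mathbb E}_\rho\{{\rm d}({\mathbb Q}^\theta|L^\theta)\}$. Then the joint problem's inner maximization at $\theta$ carries the effective parameter $\delta\,\frac{{\rm d}\eta}{{\rm d}\rho}(\theta)=\delta(1+O(\varepsilon))$. So the nested decomposition the paper writes is exact only for an $\eta$-weighted penalty. This does not affect the proposition itself, since ${\mathbb Q}^{\theta,\delta}$ and $\eta^\varepsilon$ are defined by the decoupled argmax problems, and those are exactly what your lemma treats.
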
 
We can now compute worst-case sensitivity. The expected reward under the worst-case measure is
\begin{align*}
V(\varepsilon, \delta; f(x,\cdot))  = {\mathbb E}_{{\mathbb Q}^{\varepsilon, \delta}}\big[f(x, Y)\big] 
  = {\mathbb E}_{\eta^{\varepsilon}} \Big[{\mathbb E}_{{\mathbb Q}^{\theta,\delta}} \big\{f(x, Y)\big\}\Big].
\end{align*}
By Proposition \ref{prop:wc-bayes}, the inner expectation
\begin{align*}
{\mathbb E}_{{\mathbb Q}^{\theta,\delta}} \big[f(x, Y)\big] & = {\mathbb E}_{L^\theta}\Big[\frac{{\rm d}{\mathbb Q}^{\theta, \delta}}{{\rm d}L^{\theta}} f(x, Y)\Big] \\
& = {\mathbb E}_{L^\theta}\Big[\Big(1 + \frac{\delta}{\phi{''}(1)}\Big\{f(Y)-{\mathbb E}_{L^\theta}[f(Y)]\Big\} \Big)f(x, Y)\Big]+o(\delta) \\
& = {\mathbb E}_{L^\theta}[f(x, Y)] + \frac{\delta}{\phi{''}(1)} {\mathbb V}_{L^{\theta}}[f(x, Y)] + o(\delta).
\end{align*}
It now follows that 
\begin{align*}
\lefteqn{V(\varepsilon, \delta; f(x, \cdot))}\\ 
& = {\mathbb E}_\rho\Big[\frac{{\rm d}\eta^{\varepsilon}}{{\rm d}\rho}\Big({\mathbb E}_{L^\theta}[f(x, Y)] + \frac{\delta}{\phi{''}(1)} {\mathbb V}_{L^{\theta}}[f(x, Y)] \Big)\Big] + o(\delta) \\
& = {\mathbb E}_\rho \Big[{\mathbb E}_{L^\theta}\big\{f(x, Y)\big\}\Big]+ \frac{\delta}{\phi{''}(1)}  {\mathbb E}_{\rho} \Big[{\mathbb V}_{L^\theta}\big\{f(x, Y)\big\}\Big] + \frac{\varepsilon}{\phi{''}(1)} {\mathbb V}_{\rho} \Big[{\mathbb E}_{L^\theta}\big\{f(x, Y)\big\}\Big]  + O(\varepsilon \delta),
\end{align*}
and the worst-case sensitivity with respect to the posterior and the likelihood are given by
\begin{align}
{\mathcal S}_{\rho}(f) & = \lim_{\varepsilon\rightarrow 0}\frac{1}{\varepsilon} \left\{V(\varepsilon, 0; f(x, \cdot)) - {\mathbb E}_\rho \Big[{\mathbb E}_{L^\theta}\big\{f(x, Y)\big\}\Big]\right\}  \nonumber \\ 
& =  \frac{1}{\phi{''}(1)} {\mathbb V}_{\rho} \Big[{\mathbb E}_{L^\theta}\big\{f(x, Y)\big\}\Big]
\label{eq:WCSprior} 
\end{align}
and
\begin{align}
{\mathcal S}_{\mathcal L}(f) 
& = \lim_{\delta\rightarrow 0} \frac{1}{\delta} \left\{V(0, \delta; f(x, \cdot)) - {\mathbb E}_\rho \Big[{\mathbb E}_{L^\theta}\big\{f(x, Y)\big\}\Big]\right\}  \nonumber \\ 
& = \frac{1}{\phi{''}(1)} {\mathbb E}_{\rho} \Big[{\mathbb V}_{L^\theta}\big\{f(x, Y)\big\}\Big].
\label{eq:WCSlikelihood}
\end{align}

\end{document}